\documentclass[11pt]{article}

\usepackage[T1]{fontenc}
\usepackage{lmodern}
\usepackage{amsmath,amssymb,amsthm}
\usepackage[margin=1in]{geometry}
\usepackage{booktabs}
\usepackage[hidelinks]{hyperref}
\usepackage[nameinlink,noabbrev]{cleveref}

\newtheorem{theorem}{Theorem}
\newtheorem{corollary}[theorem]{Corollary}

\newcommand{\lavg}{\ell_{\mathrm{avg}}}
\newcommand{\doi}[1]{\href{https://doi.org/#1}{doi:#1}}

\title{Lean-Certified Infinite Counterexamples to Written on the Wall II Conjecture 194}
\author{Cameron Beeley}
\date{Version 1.2 (15 September 2026)}

\begin{document}
\maketitle

\begin{abstract}
For a finite simple graph $G$, let $\alpha(G)$ denote its independence number and let
\[
  \lavg(G)=\frac{1}{|V(G)|}\sum_{v\in V(G)}\alpha(G[N_G(v)])
\]
be the average independence number of its open neighbourhoods.  Written on the Wall II
Conjecture~194 asserts that every connected graph satisfying
$\alpha(G)\leq 1+\lavg(G)$ has a Hamiltonian path.  We give a four-parameter family of
counterexamples.  Its principal two-parameter subfamily satisfies the proposed inequality
with equality: for every pair of integers $s\geq 1$ and $t\geq 3$, it has $(s+1)t^2$
vertices, independence number $t+1$, average neighbourhood independence $\lavg=t$, and
minimum degree $s$, but has no Hamiltonian path.  This entire infinite subfamily is
machine-checked in Lean~4: one universally quantified theorem certifies its order,
connectivity, independence number, average neighbourhood independence, minimum degree,
conjecture hypothesis, and failure of traceability.  Thus no fixed lower bound on the
minimum degree repairs the conjecture.  The case $(s,t)=(1,3)$ has $18$ vertices, but the
formal certificate is parametric rather than a verification of that one graph alone.
\end{abstract}

\medskip
\noindent\textbf{2020 Mathematics Subject Classification.}
Primary 05C45; Secondary 05C69.

\noindent\textbf{Keywords.}
Hamiltonian path, independence number, neighbourhood independence, Graffiti.pc,
minimum degree, block structure, computer-generated conjecture, Lean.

\section{Introduction}

All graphs in this note are finite, simple, and undirected.  A graph is \emph{traceable} if
it has a Hamiltonian path.  For $v\in V(G)$, write $N_G(v)$ for the open neighbourhood of
$v$ and set
\[
  \ell_G(v)=\alpha(G[N_G(v)]),
  \qquad
  \lavg(G)=\frac{1}{|V(G)|}\sum_{v\in V(G)}\ell_G(v).
\]

Classical sufficient conditions for traceability often bound the independence number
against the connectivity.  The prototype is the theorem of Chv\'atal and Erd\H{o}s
\cite{chvatalerdos1972}, whose Hamiltonian-path corollary states that
$\alpha(G)\leq\kappa(G)+1$ implies that $G$ is traceable.  The Written on the Wall II list
places the conjecture studied here immediately beside that corollary: its Conjecture~195
is exactly $\alpha(G)-1\leq\kappa(G)\Rightarrow G$ traceable, and is recorded there as a
known theorem of Chv\'atal and Erd\H{o}s \cite{delavinaWOWII}.  Conjecture~194 is the same
inequality with the local average $\lavg(G)$ substituted for the connectivity $\kappa(G)$.
Our family shows that the substitution is unsound, and indicates why: $\lavg$ averages a
purely local quantity and is blind to the articulation structure that $\kappa$ controls.

The Graffiti.pc program generated many conjectured relations between graph invariants;
their development and the associated Written on the Wall II collection are described in
\cite{delavina2005history,delavina2007independence}.  The collection includes the following
sufficient condition for traceability \cite{delavinaWOWII}.

\begin{quote}
\textbf{Written on the Wall II Conjecture 194.}
If $G$ is a simple connected graph on $n>1$ vertices such that
$\alpha(G)\leq 1+\lavg(G)$, then $G$ has a Hamiltonian path.
\end{quote}

\noindent
We follow the source statement, writing $\alpha$ and $\lavg$ for the invariants it denotes
by $a$ and $l_{\mathrm{avg}}$.  The hypothesis $n>1$ is immaterial here: every graph we
construct has at least $18$ vertices.

We show that the conjecture fails for infinitely many graphs, including infinitely many
graphs on the proposed boundary $\alpha=1+\lavg$ and with arbitrarily large minimum degree.
The construction combines a dense join, which controls the local independence numbers, with
at least three end cliques, each attached to the rest of the graph through a single vertex.
These end blocks force distinct endpoints in any spanning path.  The complete two-parameter
equality family, including every invariant and obstruction used in the disproof, is certified
in Lean~4 at an immutable revision; the certificate is quantified over the parameters and is
not an enumeration of individual examples.

\section{The counterexample family}

For integers $m,r,t,s$ with $m,s\geq 1$ and $1\leq t\leq r$, define
$F^{(s)}_{m,r,t}$ as follows.  Take pairwise disjoint vertex sets
\[
  C=\{c_1,\dots,c_m\},\qquad
  H=\{h_1,\dots,h_r\},\qquad
  B_1,\dots,B_t,
\]
where $|B_i|=s$ for every $i$.
The set $C$ induces a clique, $H$ induces an independent set, and every vertex of $C$ is
adjacent to every vertex of $H$.  Each $B_i$ induces a clique and every vertex of $B_i$
is adjacent to $h_i$, with no other edges incident with $B_i$ outside $B_i$.  Equivalently,
$B_i\cup\{h_i\}$ induces a clique attached to the rest of the graph only through $h_i$.
When $s=1$, the sets $B_i$ are single pendant vertices and this is the original
three-parameter family.

\begin{theorem}\label{thm:family}
Let $s\geq 1$ and $3\leq t<r$, and put
\[
  M_s(r,t)=r^2-2r+\bigl(s(r-2)-1\bigr)t.
\]
For every $m\geq M_s(r,t)$, the connected graph $F^{(s)}_{m,r,t}$ satisfies
\[
  \alpha(F^{(s)}_{m,r,t})\leq 1+\lavg(F^{(s)}_{m,r,t})
\]
but has no Hamiltonian path.  Equality holds in the displayed inequality if and only if
$m=M_s(r,t)$.
\end{theorem}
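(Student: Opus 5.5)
The plan is to compute $\alpha$ and $\lavg$ explicitly from the block structure, reduce the inequality to a linear condition on $m$, and then rule out Hamiltonian paths by a separate counting argument on the end blocks. Write $G=F^{(s)}_{m,r,t}$ and $n=|V(G)|=m+r+st$. Connectivity is immediate: $m\geq 1$, every vertex of $H$ is joined to all of $C$, and each $B_i$ hangs off $h_i$. I will actually use only $m\geq1$ there, so I would also note that the hypothesis $m\geq M_s(r,t)$ is consistent with this.

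\textbf{Independence number.} An independent set contains at most one vertex of the clique $C$. If it contains some $c\in C$, it avoids $H$ entirely and can add at most one vertex from each clique $B_i$, giving size at most $1+t$. If it avoids $C$, then for each $i\leq t$ it can use at most one vertex of the clique $B_i\cup\{h_i\}$, together with all of $h_{t+1},\dots,h_r$; this gives size at most $r$. The set $H$ itself attains $r$. Since $t<r$ we have $t+1\leq r$, so $\alpha(G)=r$.

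\textbf{Local independence numbers.} I would tabulate $\ell_G(v)$ by vertex class.
\begin{itemize}
\item For $c\in C$, $N(c)=(C\setminus\{c\})\cup H$. Every vertex of $H$ is adjacent to all of $C\setminus\{c\}$, so $\ell=r$ (attained by $H$).
\item For $h_i$ with $i\leq t$, $N(h_i)=C\cup B_i$ is a disjoint union of two cliques with no edges between them, so $\ell=2$.
\item For $h_i$ with $i>t$, $N(h_i)=C$, so $\ell=1$.
\item For $b\in B_i$, $N(b)=(B_i\setminus\{b\})\cup\{h_i\}$ is a clique, so $\ell=1$.
\end{itemize}
Summing gives
\[
  n\,\lavg(G)=mr+2t+(r-t)+st=mr+r+t+st.
\]
The inequality $r\leq 1+\lavg(G)$ is then equivalent to $(r-1)(m+r+st)\leq mr+r+t+st$. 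This simplifies to
\[
  m\geq r^2-2r+(r-2)st-t=M_s(r,t).
\]
Every step is an equivalence, so equality in the inequality holds exactly when $m=M_s(r,t)$.

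\textbf{Non-traceability.} This is the step needing an actual argument, though it is short. Suppose $P$ is a Hamiltonian path. Fix $i\leq t$ and consider a maximal subpath of $P$ lying inside $B_i$. Any vertex of $P$ adjacent to it along $P$ but outside $B_i$ must be a neighbour of $B_i$ outside $B_i$, and the only such vertex is $h_i$. Since $h_i$ occurs once on $P$, the vertices of $B_i$ form one contiguous segment of $P$ with at most one outside neighbour on $P$. The vertex set $V(G)\setminus B_i$ is nonempty (it contains $C$), so the segment is not all of $P$. Hence the segment must sit at an end of $P$, and $B_i$ contains an endpoint of $P$. The sets $B_1,\dots,B_t$ are disjoint and $t\geq3$, so $P$ would have at least three endpoints, which is a contradiction.

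The main point needing care is this contiguity and endpoint argument; the invariant computations are routine bookkeeping once the vertex classes are separated.
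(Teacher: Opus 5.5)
Your proposal is correct and follows the paper's proof step for step: the same case split giving $\alpha=r$, the same tabulation of $\ell$ by vertex class, the same linear reduction to $m\geq M_s(r,t)$, and the same end-block argument forcing an endpoint of any spanning path into each $B_i$. You phrase that last step through maximal segments of $B_i$ along $P$, whereas the paper uses the unique path edge crossing the cut at $h_i$. One small point: the fact that $h_i$ occurs once on $P$ does not by itself make $B_i$ contiguous on $P$, since two end segments flanking $h_i$ are excluded only because $C$ lies outside $B_i\cup\{h_i\}$ (exactly the paper's ``cannot use two'' step). You never need contiguity, though, because each maximal segment already has exactly one outside neighbour and so lies at an end of $P$.
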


\begin{proof}
The graph is connected because $C$ is nonempty and is completely joined to $H$.
We first determine its independence number.  An independent set meeting $C$ contains
at most one vertex of $C$, no vertex of $H$, and at most one vertex of each $B_i$; it
therefore has size at most $t+1\leq r$.  An independent set disjoint from $C$ contains at
most one vertex from each clique $B_i\cup\{h_i\}$, together with at most the $r-t$
remaining vertices of $H$.  Its size is therefore at most $r$, and this bound is attained
by $H$.  Hence
\[
  \alpha(F^{(s)}_{m,r,t})=r.
\]

The local independence numbers have four forms.  If $c\in C$, then
$N(c)=(C\setminus\{c\})\cup H$, so $\ell(c)=r$.  If $1\leq i\leq t$, then
$N(h_i)=C\cup B_i$ is the disjoint union of two cliques, so $\ell(h_i)=2$.  Each of the
other $r-t$ vertices of $H$ has neighbourhood $C$ and local independence number $1$.
Finally, if $b\in B_i$, then $N(b)=(B_i\setminus\{b\})\cup\{h_i\}$ is a clique, so
$\ell(b)=1$.  Consequently
\begin{equation}\label{eq:localsum}
  \sum_{v\in V(F^{(s)}_{m,r,t})}\ell(v)
  =mr+2t+(r-t)+st
  =mr+r+(s+1)t.
\end{equation}
Since $|V(F^{(s)}_{m,r,t})|=m+r+st$, the conjectured hypothesis is equivalent to
\begin{align*}
  r
  &\leq 1+\frac{mr+r+(s+1)t}{m+r+st}\\
  &\Longleftrightarrow
  (r-1)(m+r+st)\leq mr+r+(s+1)t\\
  &\Longleftrightarrow
  m\geq r^2-2r+\bigl(s(r-2)-1\bigr)t.
\end{align*}
This also shows that equality holds exactly when $m=M_s(r,t)$.

Finally, fix $i\leq t$.  The only edges from the nonempty set $B_i$ to its complement are
incident with $h_i$.  A spanning path must use at least one such edge.  It cannot use two:
both would exhaust the two path edges available at $h_i$, leaving no path edge from
$B_i\cup\{h_i\}$ to the nonempty remainder of the graph.  Thus exactly one edge of a
spanning path would cross from $B_i$ to its complement, forcing one endpoint of the path
to lie in $B_i$.  The pairwise disjoint sets $B_1,\dots,B_t$ would therefore require at
least $t\geq 3$ distinct endpoints, whereas a path has only two.  Hence
$F^{(s)}_{m,r,t}$ is not traceable.
\end{proof}

Setting $r=t+1$ gives a particularly transparent two-parameter equality family.

\begin{corollary}\label{cor:equality}
For integers $s\geq 1$ and $t\geq 3$, let
\[
  G_{s,t}=F^{(s)}_{(s+1)t(t-1)-1,\,t+1,\,t}.
\]
Then
\[
  |V(G_{s,t})|=(s+1)t^2,\qquad
  \alpha(G_{s,t})=t+1,\qquad
  \sum_{v\in V(G_{s,t})}\ell_{G_{s,t}}(v)=(s+1)t^3,
\]
and
\[
  \lavg(G_{s,t})=t,\qquad \delta(G_{s,t})=s.
\]
In particular, $\alpha(G_{s,t})=1+\lavg(G_{s,t})$, but $G_{s,t}$ has no
Hamiltonian path.
\end{corollary}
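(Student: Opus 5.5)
The plan is to specialise \Cref{thm:family} to $r=t+1$ and then read off the remaining invariants from the computations in its proof. First I check that the parameters are admissible. With $r=t+1$ we have $3\leq t<r$, and
\[
  M_s(t+1,t)=(t+1)^2-2(t+1)+\bigl(s(t-1)-1\bigr)t=t^2-1+st(t-1)-t .
\]
Since $t^2-t=t(t-1)$, this simplifies to $(s+1)t(t-1)-1$, which is exactly the chosen $m$. Also $m\geq 1$, because $t\geq 3$. Hence \Cref{thm:family} applies with $m=M_s(r,t)$. It immediately gives three things: connectivity, the equality $\alpha=1+\lavg$ (the equality case of the theorem), and non-traceability.

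Next come the explicit values, taken from the proof of \Cref{thm:family}.
\begin{itemize}
\item The order is $m+r+st=(s+1)t(t-1)-1+t+1+st=(s+1)t^2-(s+1)t+(s+1)t=(s+1)t^2$.
\item The independence number was shown there to be $r=t+1$.
\item By \eqref{eq:localsum}, the local sum is $mr+r+(s+1)t=(m+1)r+(s+1)t=(s+1)t(t-1)(t+1)+(s+1)t=(s+1)t^3$.
\end{itemize}
Dividing the local sum by the order gives $\lavg=t$, which agrees with the equality $\alpha=1+\lavg$.

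The only new ingredient is the minimum degree. I will list the degrees of the four vertex types.
\begin{itemize}
\item A vertex of $C$ has degree $m-1+r$.
\item A vertex $h_i$ with $i\leq t$ has degree $m+s$.
\item The remaining vertex $h_{t+1}$ of $H$ has degree $m$.
\item A vertex of $B_i$ has degree $(s-1)+1=s$.
\end{itemize}
Since $m=(s+1)t(t-1)-1\geq 6(s+1)-1>s$, every vertex outside the $B_i$ has degree larger than $s$, and therefore $\delta=s$.

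I do not expect a real obstacle here. The steps most prone to error are the algebraic simplification of $M_s(t+1,t)$ and the cubic identity for the local sum, so I would expand both carefully.
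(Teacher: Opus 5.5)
Your proposal is correct and follows the paper's proof essentially step for step. You specialise \cref{thm:family} to $r=t+1$ with $m=M_s(t+1,t)=(s+1)t(t-1)-1$, read off the order, $\alpha$, and the local sum from \eqref{eq:localsum}, and obtain $\delta=s$ from the degree list together with $m>s$; your explicit check that $m\geq 1$ is a small extra that the paper leaves implicit.
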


\begin{proof}
Substitution into \cref{thm:family} gives
\[
  M_s(t+1,t)
  =(t+1)^2-2(t+1)+\bigl(s(t-1)-1\bigr)t
  =(s+1)t(t-1)-1.
\]
The order is
\[
  (s+1)t(t-1)-1+(t+1)+st=(s+1)t^2,
\]
and \cref{eq:localsum} gives
\[
  \bigl((s+1)t(t-1)-1\bigr)(t+1)+(t+1)+(s+1)t
  =(s+1)t^3.
\]
Every vertex in a set $B_i$ has degree $s$.  The vertices in $H$ have degree $m$ or
$m+s$, and each vertex of $C$ has degree $m+t$.  Since
$m=(s+1)t(t-1)-1>s$ for $t\geq 3$, the minimum degree is $s$.  The remaining statements
follow from \cref{thm:family}.
\end{proof}

The family immediately rules out any repair based on a fixed minimum-degree threshold.

\begin{corollary}\label{cor:min-degree}
For every integer $d\geq 1$, there are infinitely many connected graphs $G$ with
$\delta(G)=d$ and $\alpha(G)=1+\lavg(G)$ that have no Hamiltonian path.
For the subfamily with $t=3$, one moreover has
\[
  |V(G_{d,3})|=9(d+1),\qquad \delta(G_{d,3})=\frac{|V(G_{d,3})|}{9}-1.
\]
\end{corollary}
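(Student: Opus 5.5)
The plan is to specialise \cref{cor:equality} to $s=d$ and let $t$ range over all integers $t\geq 3$. For each such $t$, \cref{cor:equality} already supplies everything except infinitude and connectivity. It gives $\delta(G_{d,t})=d$ and $\alpha(G_{d,t})=1+\lavg(G_{d,t})$, and it shows that $G_{d,t}$ has no Hamiltonian path. Connectivity comes from \cref{thm:family}, since $G_{d,t}$ is an instance of $F^{(s)}_{m,r,t}$ with $r=t+1>t\geq 3$ and $m=M_d(t+1,t)$.

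To get infinitely many graphs, I would show that the graphs $G_{d,t}$ are pairwise non-isomorphic. This is immediate, because $|V(G_{d,t})|=(d+1)t^2$ is strictly increasing in $t$ for fixed $d\geq 1$. One could equally observe that $\alpha(G_{d,t})=t+1$ takes distinct values. Either invariant separates the graphs, so the family $\{G_{d,t}: t\geq 3\}$ is infinite.

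For the second assertion, I would substitute $s=d$ and $t=3$ into the order formula of \cref{cor:equality}. This gives $|V(G_{d,3})|=(d+1)\cdot 9=9(d+1)$. Combined with $\delta(G_{d,3})=d$, it follows that $\delta(G_{d,3})=|V(G_{d,3})|/9-1$.

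There is no real obstacle here. The only point needing care is confirming that the hypotheses of \cref{cor:equality} ($s\geq 1$, $t\geq 3$) hold for every $d\geq 1$. The inequality $m>s$ used in \cref{cor:equality} for the minimum degree should also remain valid for all $d$. It does: with $t\geq 3$ we have $m=(d+1)t(t-1)-1\geq 6(d+1)-1>d$. The corollary thus follows directly from the earlier results.
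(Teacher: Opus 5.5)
Your proposal is correct and is the paper's proof: take $G=G_{d,t}$ from \cref{cor:equality} with $t\geq 3$ varying, and read off $|V(G_{d,3})|=9(d+1)$ from the order formula $(d+1)t^2$. Your extra remarks spell out what the paper leaves implicit, namely connectivity via \cref{thm:family}, pairwise non-isomorphism via the strictly increasing orders, and the check $m>d$.
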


\begin{proof}
Take $G=G_{d,t}$ in \cref{cor:equality} and let $t$ range over the integers at least $3$.
The displayed relation for $t=3$ follows from $|V(G_{d,3})|=(d+1)3^2$.
\end{proof}

Some small members of the equality family are shown in \cref{tab:first}.

\begin{table}[ht]
  \centering
  \begin{tabular}{@{}ccccccc@{}}
    \toprule
    $s$ & $t$ & $|C|$ & $|V(G_{s,t})|$ & $\delta$ & $\alpha$ & $\lavg$ \\
    \midrule
    $1$ & $3$ & $11$ & $18$ & $1$ & $4$ & $3$ \\
    $2$ & $3$ & $17$ & $27$ & $2$ & $4$ & $3$ \\
    $3$ & $3$ & $23$ & $36$ & $3$ & $4$ & $3$ \\
    $1$ & $4$ & $23$ & $32$ & $1$ & $5$ & $4$ \\
    $2$ & $4$ & $35$ & $48$ & $2$ & $5$ & $4$ \\
    $5$ & $3$ & $35$ & $54$ & $5$ & $4$ & $3$ \\
    \bottomrule
  \end{tabular}
  \caption{Sample graphs in the equality family of \cref{cor:equality}.}
  \label{tab:first}
\end{table}

\section{A machine-checked certificate for the infinite family}\label{sec:lean}

Conjecture~194 is catalogued in the Formal Conjectures benchmark
\cite{formalconjecturesrepo,formalconjecturespaper}, a Lean~4 collection of open and
solved research statements, where it was carried with the attribute
\texttt{category research open} until the correction recorded below.  Related Lean-certified
work on the same collection includes proofs of Conjectures 141--143 \cite{ferudun2026}.  The Lean~4 development \cite{lean4}, built on
\texttt{mathlib} \cite{mathlib}, at the immutable commit recorded in
\cite{formalconjectures194} formalises $G_{s,t}$ directly.  Its theorem
\texttt{family\_certificate} is universally quantified over natural numbers $s$ and $t$.
From hypotheses $1\leq s$ and $3\leq t$, it proves in a single conjunction that
\[
  |V(G_{s,t})|=(s+1)t^2,\quad G_{s,t}\text{ is connected},\quad
  \alpha(G_{s,t})=t+1,
\]
\[
  \lavg(G_{s,t})=t,\quad \delta(G_{s,t})=s,\quad
  \alpha(G_{s,t})\leq 1+\lavg(G_{s,t}),
\]
and that $G_{s,t}$ has no Hamiltonian path.  In particular, the formalisation certifies
every member of the infinite equality family in \cref{cor:equality}, not only a finite
list of examples.  Equality in the conjectured bound follows immediately inside the
formal theory from the certified identities $\alpha=t+1$ and $\lavg=t$.

The same immutable revision contains a separate theorem \texttt{conjecture194} proving
the repository's exact proposition with \texttt{answer(False)}.  It specialises the
construction to $G_{1,3}$, the $18$-vertex graph consisting of an $11$-vertex clique,
four independent vertices joined completely to the clique, and three pendant vertices
attached to distinct vertices on the independent side.  Both proof files are free of
\texttt{sorry} and \texttt{admit} and compile with the project's pinned Lean~4.27.0
toolchain.

Thus the infinite equality family, the disproof of Conjecture~194, and the arbitrarily
large minimum-degree consequence are all covered by the parametric certificate.  The
broader four-parameter envelope in \cref{thm:family} is an additional direct argument in
this note and has not been formalised in full; it is not needed for any of those three
headline conclusions.

\section{Concluding remarks}

The Lean-certified equality examples show that replacing the weak inequality in
Conjecture~194 by an equality does not repair the statement.  More generally, increasing
$m$ beyond $M_s(r,t)$ makes the conjectured inequality strict while leaving the end-block
obstruction unchanged.  Thus the failure is not confined to a single boundary example.

The construction indicates that any repaired sufficient condition involving
$\alpha(G)$ and $\lavg(G)$ must also control a structural obstruction to traceability;
the two averaged invariants alone do not prevent arbitrarily many end blocks.  In
particular, \cref{cor:min-degree} shows that no fixed minimum-degree assumption suffices.
A condition controlling articulation structure, rather than merely vertex degrees, would
be required to exclude this family.  No claim is made here that $G_{1,3}$ is a smallest
counterexample among all graphs.

\section*{Code availability}
The immutable Lean source, including the parametric family certificate and the direct
formal negation of Conjecture~194, is linked in \cite{formalconjectures194} and archived
at \doi{10.5281/zenodo.22779981}.  The
corresponding correction to the upstream Formal Conjectures repository was merged on
7~August 2026 \cite{formalconjecturesPR}; that repository now records Conjecture~194 as
solved, with the formalised implication marked \texttt{answer(False)} and attributed to
the immutable revision cited above.

\section*{Declaration of generative AI use}
During the preparation of this work the author used OpenAI Codex to assist with exploratory
computation, Lean formalisation, source discovery, algebra checking, and language editing.
After using this tool, the author reviewed and edited the content as needed and takes full
responsibility for the content of the publication.

\end{document}